\newtheorem{remark}{Remark}
\newcommand{\Z}{\mathbf{Z}}
\newcommand{\Aut}{\mathrm{Aut}}
 \newcommand{\HH}{\mathbf{H}}
\DeclareMathAlphabet{\mathscr}{OT1}{rsfs}{n}{it}
\newtheorem{theorem}{Theorem}
\newtheorem{lemma}{Lemma}
\newcommand{\ZZ}{\boldsymbol{\zeta}}
\newcommand{\KL}{\mathrm{KL}}
\newtheorem{heuristic definition}{Heuristic definition}
\title{Entropy of Cohen-Lenstra measures: the $u$-aspect}
\author{Artane Siad}
\date{March 17, 2024}
\begin{document}  

\maketitle

 \begin{abstract}
Let ${\rm \mathbf{H}}(\nu^{u}_{\rm CL})$ be the entropy of the Cohen-Lenstra measure on finite abelian $p$-groups associated to an integral unit-rank $0 \le u \in \mathbb{N}$. In this note, we show that $0 < {\rm \mathbf{H}}(\nu^{u}_{\rm CL}) < \infty$ for all $u$, ${\rm \mathbf{H}}(\nu^{u}_{\rm CL})$ is a strictly decreasing function of $u \ge 0$, and ${\rm \mathbf{H}}(\nu^{u}_{\rm CL}) \xrightarrow{u \to \infty} 0$.
In particular, this shows that the groupoid measure is an entropy maximizer in the class of Cohen-Lenstra measures of varying integral unit-rank on finite abelian $p$-groups. 
 \end{abstract}

\section{Entropy and statement}

Let $(X,\nu)$ be a discrete probability space. The \textbf{Shannon entropy} of $\nu$ is defined as the expected value of $-\log \nu$
$${\rm \mathbf{H}}(\nu) := \mathbb{E}(\log \nu) = - \sum_{x \in X} \nu(x) \log \nu(x) \ge 0$$
and is a measure of the information content of the measure $\nu$ \cite{MR4679166}.

Fix a prime $p$ and let ${\rm FinAb}_{p}$ denote the category of finite abelian $p$-groups. 
The Cohen-Lenstra measure on ${\rm FinAb}_{p}$ associated to an integral unit-rank $0 \le u \in \mathbb{N}$,  $\nu^{u}_{\rm CL}$,  is the groupoid measure on ${\rm FinAb}_{p}$ quotiented by $u$ randomly-chosen elements. Alternatively, $\nu^{u}_{\rm CL}$ is characterized by the property 
$$\nu^{u}_{\rm CL}(A) \propto \frac{1}{\#A^{u} \#\Aut A}$$
for all finite abelian $p$-groups $A \in {\rm FinAb}_{p}$. 

The purpose of this note is to establish some basic properties of ${\rm \mathbf{H}}(\nu^{u}_{\rm CL})$: we show that ${\rm \mathbf{H}}(\nu^{u}_{\rm CL})$ is finite for all $u$, strictly decreasing as a function of $u$, and converges to $0$ as $u$ approaches $\infty$.

\begin{theorem} \label{thm:main-entropy}
Let ${\rm \mathbf{H}}(\nu^{u}_{\rm CL})$ denote the entropy of the Cohen-Lenstra measure $\nu^{u}_{\rm CL}$ on finite abelian $p$-groups associated to unit-rank $0 \le u \in \mathbb{N}$. Then 
\begin{itemize}
\item[\rm{I)}] ${\rm \mathbf{H}}(\nu^{u}_{\rm CL}) < \infty$ for all $u$;
\item[\rm{II)}]  ${\rm \mathbf{H}}(\nu^{u}_{\rm CL})$ is a strictly decreasing function of $u \ge 0$; and, 
\item[\rm{III)}]  ${\rm \mathbf{H}}(\nu^{u}_{\rm CL}) \xrightarrow{u \to \infty} 0$.
\end{itemize}
\end{theorem}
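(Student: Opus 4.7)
My plan is to work throughout with the Cohen--Lenstra closed form
$$Z_u := \sum_{A\in{\rm FinAb}_p}\frac{1}{|A|^u\,|\Aut A|}=\prod_{k\ge u+1}(1-p^{-k})^{-1},$$
the resulting decomposition
$${\rm\mathbf{H}}(\nu^u_{\rm CL})=\log Z_u+u\,\mathbb{E}_u[\log|A|]+\mathbb{E}_u[\log|\Aut A|],$$
and the moment identity $\mathbb{E}_u[|A|^{-s}]=Z_{u+s}/Z_u$, which is finite on a neighborhood of $s=0$ and hence controls all polynomial moments of $\log|A|$ under $\nu^u_{\rm CL}$.

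For (I), I would combine the moment identity with the crude bound $|\Aut A|\le|A|^{r_A}\le|A|^{\log_p|A|}$ (an endomorphism is determined by the images of $r_A$ generators), giving $\log|\Aut A|\le(\log|A|)^2/\log p$, which is integrable against $\nu^u_{\rm CL}$. For (III), the asymptotics $Z_u=1+\frac{1}{(p-1)p^u}+O(p^{-2u})$ give $\log Z_u=O(p^{-u})$; writing $\Pr_u[|A|=p^n]=s_n p^{-un}/Z_u$ with $s_n:=\sum_{\lambda\vdash n}|\Aut A_\lambda|^{-1}$ (summable in $n$), the $n=1$ contribution dominates as $u\to\infty$, so $\mathbb{E}_u[\log|A|]$ and $\mathbb{E}_u[\log|\Aut A|]$ are both $O(p^{-u})$; hence ${\rm\mathbf{H}}(\nu^u_{\rm CL})=O(u\,p^{-u})\to 0$.

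For (II), I would extend $u\ge 0$ to a real parameter and differentiate, via $\frac{d}{du}\log Z_u=-\mathbb{E}_u[\log|A|]$ and $\frac{d}{du}\mathbb{E}_u[g]=-\mathrm{Cov}_u(g,\log|A|)$, to obtain
$$\frac{d}{du}{\rm\mathbf{H}}(\nu^u_{\rm CL})=-u\,\mathrm{Var}_u(\log|A|)-\mathrm{Cov}_u(\log|\Aut A|,\log|A|).$$
Conditioning on $n_A:=\log_p|A|$ (for which the conditional law on partitions of size $n$ is $\propto 1/|\Aut A_\lambda|$, independent of $u$) rewrites the covariance as $(\log p)\,\mathrm{Cov}_u(\phi(n_A),n_A)$, where $\phi(n):=\mathbb{E}[\log|\Aut A|\mid n_A=n]$. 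Applying Chebyshev's sum inequality to the marginal law of $n_A$ under $\nu^u_{\rm CL}$ then yields nonnegativity of this covariance as soon as $\phi$ is nondecreasing; strict positivity follows because $\phi(n)\ge(n-1)\log p+\log(p-1)$ (from the minimizing cyclic summand $A=\Z/p^n$) is non-constant while $n_A$ has full support on $\mathbb{N}$. Combined with $\mathrm{Var}_u(\log|A|)>0$, this gives $\frac{d}{du}{\rm\mathbf{H}}(\nu^u_{\rm CL})<0$ for every $u\ge 0$.

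The main obstacle is thus the combinatorial claim that $\phi(n)$ is nondecreasing in $n$. My plan is to invoke the Hillar--Rhea formula $|\Aut A_\lambda|=p^{\sum_k(\lambda_k^*)^2}\prod_k\prod_{j=1}^{m_k}(1-p^{-j})$, show that removing any corner box from $\lambda\vdash n+1$ decreases $\log|\Aut A_\lambda|$ (the exponent $\sum_k(\lambda_k^*)^2$ drops by $2\lambda_k^*-1\ge 1$ while the multiplicity correction is an $O(1)$ $(1-p^{-j})$-type factor), and then construct a coupling transporting $\mu_{n+1}(\lambda)\propto1/|\Aut A_\lambda|$ down to $\mu_n$ along such corner-removal moves, from which monotonicity of $\phi$ is immediate. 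If this direct combinatorial route proves intractable, a fallback is analytic: extract $\phi(n+1)-\phi(n)$ from logarithmic derivatives of the generating identity $\sum_n s_n t^n=\prod_k(1-t/p^k)^{-1}$.
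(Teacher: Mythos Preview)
Your arguments for parts I) and III) are correct and take a different, arguably cleaner, route than the paper. The paper bounds $\mathbf{H}(\nu^0_{\rm CL})$ by invoking $\#\Aut A \ge p^{n-1}$ (Lemma~\ref{lem:aut-lower-bound}) together with the Hardy--Ramanujan asymptotic for the partition function, and handles III) with the crude $\log x \le x$; you instead use the moment generating function $s\mapsto Z_{u+s}/Z_u$ and the endomorphism bound $\log|\Aut A|\le (\log|A|)^2/\log p$, which gives finiteness of all polynomial moments of $\log|A|$ in one stroke and the $O(up^{-u})$ decay directly.

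For part II) the two approaches diverge completely. The paper stays with integer $u$, writes $\mathbf{H}(\nu^u_{\rm CL})=-\log F_u + F_u\sum_{A\ne 1}'\frac{\log(\#A^u\#\Aut A)}{\#A^u\#\Aut A}$, and shows that each summand is (weakly) decreasing in $u$ via the elementary inequality \eqref{eqn:decreasing-terms}; four exceptional $(A,u)$ pairs are then dispatched by numerical estimation. Your exponential-family differentiation is more conceptual and the identity $\frac{d}{du}\mathbf{H}=-u\,\mathrm{Var}_u(\log|A|)-\mathrm{Cov}_u(\log|\Aut A|,\log|A|)$ together with the conditioning step $\mathrm{Cov}_u(\log|\Aut A|,\log|A|)=(\log p)\,\mathrm{Cov}_u(\phi(n_A),n_A)$ are both correct. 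The Chebyshev correlation inequality then does reduce everything to the monotonicity of $\phi(n)=\mathbb{E}_{\mu_n}[\log|\Aut A_\lambda|]$.

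That reduction, however, is where the proposal has a genuine gap. You verify (correctly) that removing a corner box never increases $|\Aut A_\lambda|$: the exponent $\sum_k(\lambda_k^*)^2$ drops by $2i-1$ and the multiplicity factors contribute at worst $(1-p^{-1})^{-1}$, so the ratio is $\ge p^{2i-1}(1-p^{-1})\ge p-1\ge 1$. But the second step, the existence of a coupling of $\mu_{n+1}$ and $\mu_n$ supported on corner-removal pairs, is a stochastic-domination statement that does not follow from the pointwise inequality; it is equivalent (via Strassen/Hall) to the family of mass inequalities $\mu_{n+1}(S)\le \mu_n(\partial S)$ for every $S$ of partitions of $n+1$, where $\partial S$ denotes the set of corner-removal images. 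You have not established this, and it is not obvious. Your fallback is also not yet a proof: the product identity $\sum_n s_n t^n=\prod_k(1-t p^{-k})^{-1}$ governs $s_n=\sum_{\lambda\vdash n}|\Aut A_\lambda|^{-1}$, whereas $\phi(n)=t_n/s_n$ requires the weighted sums $t_n=\sum_{\lambda\vdash n}\log|\Aut A_\lambda|\,/\,|\Aut A_\lambda|$, for which there is no analogous closed form; a logarithmic derivative in $t$ recovers $n$, not $\log|\Aut|$. So as written, part II) is incomplete: either the coupling must be constructed explicitly, or the monotonicity of $\phi$ proved by other means, before the derivative argument closes.
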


We view our statement as a first step in introducing the Principle of Maximum Entropy in the study of Cohen-Lenstra measures. Such a principle has been profitably exploited in probability, see for instance, the information theoretic proofs of the Central Limit Theorem \cite{MR0124081,MR0815975}.  
In particular, Theorem \ref{thm:main-entropy} shows that the groupoid measure is an entropy maximizer for Cohen-Lenstra measures of varying integral unit-rank on finite abelian $p$-groups. 

\begin{remark}
There are variants of the Cohen-Lenstra measures where the integrality assumption is dropped and the parameter $u$ is taken to be any real number $> -1$. The estimates contained in our proof are sufficient to deduce items I) and III) of Theorem \ref{thm:main-entropy} for these variants. 
\end{remark}

We also obtain an explicit formula for the relative entropy between Cohen-Lenstra measures. 
The precise definition of the relative entropy, also called the Kullback-Leibler Divergence, $D_{\KL} \left(\nu^{u_{1}}_{\rm CL} \,\, \lvert \rvert \,\, \nu^{u_{2}}_{\rm CL} \right)$, will be given in \S \ref{sec:relative-entropy}.

\begin{theorem} \label{thm:main-relative}
Let $\nu^{u_{1}}_{\rm CL}$ and $\nu^{u_{2}}_{\rm CL}$ be Cohen-Lenstra measures on finite abelian $p$-groups associated to unit-ranks $u_{1} \ge 0 $ and $u_{2} \ge 0$ respectively. The relative entropy of $\nu^{u_{1}}_{\rm CL}$ from $\nu^{u_{2}}_{\rm CL}$ is given by:
$$D_{\KL} \left(\nu^{u_{1}}_{\rm CL} \,\, \lvert \rvert \,\, \nu^{u_{2}}_{\rm CL} \right) = 
\log \left( \frac{F_{u_{1}}}{F_{u_{2}}} \right)  
+ (u_{2}-u_{1}) \sum_{i \ge 1} \frac{\log(p)}{p^{u_{1}+i}-1} $$
where $F_{u}$ is the normalizing constant $\prod_{i \ge 1+u} (1-p^{-i})$ for the Cohen-Lenstra measure $\nu^{u}_{\rm CL}$.
\end{theorem}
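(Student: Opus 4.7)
The plan is to compute $D_{\KL}$ directly from its definition by substituting the explicit formula for $\nu^{u}_{\rm CL}$, then reduce the whole problem to computing a single expectation $\mathbb{E}_{\nu^{u_{1}}_{\rm CL}}[\log \#A]$, which I will handle via a moment generating function trick.

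\smallskip

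\noindent\textbf{Step 1: Reducing to an expectation.} Using the identity $\sum_{A} \frac{1}{\#A^{u}\#\Aut A} = F_{u}^{-1}$ (which is what forces $F_{u}$ to be the normalizing constant), we have $\nu^{u}_{\rm CL}(A) = F_{u}/(\#A^{u}\#\Aut A)$. Therefore
\[
\log \frac{\nu^{u_{1}}_{\rm CL}(A)}{\nu^{u_{2}}_{\rm CL}(A)} = \log\!\left(\frac{F_{u_{1}}}{F_{u_{2}}}\right) + (u_{2}-u_{1}) \log \#A,
\]
and plugging into $D_{\KL}(\nu^{u_{1}}_{\rm CL}\,||\,\nu^{u_{2}}_{\rm CL}) = \sum_{A} \nu^{u_{1}}_{\rm CL}(A)\log(\nu^{u_{1}}_{\rm CL}(A)/\nu^{u_{2}}_{\rm CL}(A))$ yields
\[
D_{\KL}\bigl(\nu^{u_{1}}_{\rm CL}\,||\,\nu^{u_{2}}_{\rm CL}\bigr) = \log\!\left(\frac{F_{u_{1}}}{F_{u_{2}}}\right) + (u_{2}-u_{1})\, \mathbb{E}_{\nu^{u_{1}}_{\rm CL}}[\log \#A].
\]
So it suffices to prove $\mathbb{E}_{\nu^{u_{1}}_{\rm CL}}[\log \#A] = \sum_{i\ge 1} \frac{\log(p)}{p^{u_{1}+i}-1}$.

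\smallskip

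\noindent\textbf{Step 2: A moment generating function.} For $s \ge 0$, the Cohen-Lenstra identity applied at parameter $u_{1}+s$ gives
\[
\mathbb{E}_{\nu^{u_{1}}_{\rm CL}}\bigl[\#A^{-s}\bigr] = F_{u_{1}} \sum_{A} \frac{1}{\#A^{u_{1}+s}\#\Aut A} = \frac{F_{u_{1}}}{F_{u_{1}+s}}.
\]
Differentiating at $s=0$ (with sign) gives $\mathbb{E}_{\nu^{u_{1}}_{\rm CL}}[\log \#A] = \frac{d}{ds}\log F_{u_{1}+s}\big|_{s=0}$. From the product formula $F_{u_{1}+s} = \prod_{i \ge 1}(1 - p^{-(u_{1}+s+i)})$ one reads off
\[
\frac{d}{ds}\log F_{u_{1}+s} = \sum_{i\ge 1} \frac{\log(p)\, p^{-(u_{1}+s+i)}}{1 - p^{-(u_{1}+s+i)}} = \sum_{i\ge 1}\frac{\log(p)}{p^{u_{1}+s+i}-1},
\]
and setting $s=0$ gives the desired identity.

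\smallskip

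\noindent\textbf{Step 3: Justifying the interchange.} The only delicate point is the exchange of differentiation and summation, both the summation defining the expectation $\sum_{A} \nu^{u_{1}}_{\rm CL}(A) \#A^{-s}$ and the product defining $F_{u_{1}+s}$. The former is dominated (uniformly on a neighborhood of $s=0$) by $\#A^{\epsilon}$ for small $\epsilon$, which has finite expectation by the Cohen-Lenstra identity applied at $u_{1}-\epsilon$; the latter converges absolutely and locally uniformly since $u_{1}+i \ge 1$. The finiteness of $\mathbb{E}[\log \#A]$ is built into the estimates used in Theorem \ref{thm:main-entropy}, so no new input is needed.

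\smallskip

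The main obstacle is the bookkeeping in Step 2 — recognizing that $\log \#A$ naturally appears as the derivative of $\#A^{-s}$ so that the expectation is accessible through the partition-function-like quantity $F_{u_{1}+s}$ — rather than any substantial technical difficulty, since the key Cohen-Lenstra mass formula does all the heavy lifting.
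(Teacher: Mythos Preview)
Your proof is correct and follows essentially the same approach as the paper: both reduce to computing $\mathbb{E}_{\nu^{u_1}_{\rm CL}}[\log \#A]$ and extract this expectation by differentiating the generating function $s \mapsto \sideset{}{'}\sum_A 1/(\#A^{s}\,\#\Aut A)$ using its explicit product formula. The only cosmetic difference is that the paper routes the computation through the truncated Cohen--Lenstra zeta functions $\ZZ_k^{(p)}(s) = \prod_{i=1}^{k}(1-p^{-s-i})^{-1}$ and then lets $k\to\infty$ (with DCT for the limit interchange), whereas you work directly with the infinite product $F_{u_1+s}^{-1}$ and justify a single differentiation under the sum.
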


The proof exploits explicit formulas for the Cohen-Lenstra zeta functions $\ZZ_{k}^{(p)}(s)$. 

\begin{remark}
An approach to finding explicit formulas for the Shannon entropy of Cohen-Lenstra measures would be to gain a good understanding of a variant of the Cohen-Lenstra zeta function $ \ZZ_{k}^{(p)}(s)$ with a power of $s$ not on the $1/\# A$ terms, but on the $1/\#Aut$ terms instead. An explicit expression for the entropy would then fall out by taking the derivative of this new zeta function. 
We hope to supplement a future version of the present paper with such an explicit formula for the Shannon entropy. 
\end{remark}

\section{Preliminaries}

\emph{Phillips Hall's Strange Formula} for finite abelian $p$-groups states that
$$\sideset{}{'}\sum_{A} \frac{1}{\# \Aut A} = \sideset{}{'} \sum_{A}\frac{1}{\#A} = \sum_{n} \frac{\pi(n)}{p^{n}} = \prod_{i \ge 1} (1-p^{-i})^{-1} < \infty$$ 
where the sums $\sum'$ run over isomorphism classes of finite abelian $p$-groups and $\pi$ is the partition function \cite{MR1509594,MR1181111}.
This formula, and its variants, imply the following description of the Cohen-Lenstra measures
$$\nu^{u}_{\rm CL}(A) = \frac{1}{\#A^{u} \#\Aut A}\prod_{i \ge 1} (1-p^{-u-i}).$$
Let us denote by $F_{u}$ the normalizing constant $\prod_{i \ge 1} (1-p^{-u-i}) = \prod_{j \ge u+1} (1-p^{-j})$ (see \cite{MR0756082}) and note that $F_{u}$ is a strictly increasing function of $u$. \\

The number of automorphisms of a finite abelian group is comparable, and often much larger, than the size of the group. The following lemma will be useful and is a lower bound expressing this fact. 
\begin{lemma} \label{lem:aut-lower-bound}

For a finite abelian $p$-group $A$ of size $p^{n}$ we have $$\# \Aut(A) \ge  \# A (1-p^{-1}) \ge p^{n-1}.$$
Furthermore, $\#\Aut A \ge \# A = p^{n}$ whenever $\mathrm{rank}_{p}(A) \ge 2$. 

\end{lemma}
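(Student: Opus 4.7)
The strategy is to split on the $p$-rank $r = \mathrm{rank}_{p}(A)$. When $A$ is cyclic of order $p^{n}$ (including the trivial case $n=0$), the classical identity $\#\Aut(\Z/p^{n}\Z) = p^{n-1}(p-1)$ gives $\#\Aut(A) = \#A(1-p^{-1}) \ge p^{n-1}$ directly, establishing both inequalities of the first assertion with equality on the left.

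It remains to treat the case $r \ge 2$, where I aim to prove the stronger bound $\#\Aut(A) \ge \#A$; this implies the first assertion trivially (since $1-p^{-1} \le 1$) and is precisely the content of the second assertion. The plan is to invoke the classical explicit formula (due essentially to Hall; see Macdonald's \emph{Symmetric Functions and Hall Polynomials}, Chapter II) expressing the automorphism count of a finite abelian $p$-group in terms of its partition type $\lambda = (\lambda_{1} \ge \cdots \ge \lambda_{r})$:
$$\#\Aut(A) = p^{\sum_{j}(\lambda'_{j})^{2}}\prod_{j \ge 1} \prod_{i=1}^{m_{j}(\lambda)}(1-p^{-i}),$$
where $\lambda'$ is the conjugate partition (so $\lambda'_{1} = r$ and $\sum_{j}\lambda'_{j} = n$) and $m_{j}(\lambda)$ denotes the number of parts of $\lambda$ equal to $j$ (so $\sum_{j} m_{j}(\lambda) = r$).

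I then bound the two factors separately. Using $\sum_{j}\lambda'_{j} = n$, rewrite $\sum_{j}(\lambda'_{j})^{2} = n + \sum_{j}\lambda'_{j}(\lambda'_{j}-1)$; the hypothesis $r \ge 2$ forces $\lambda'_{1} - 1 \ge 1$, hence $\lambda'_{1}(\lambda'_{1}-1) \ge \lambda'_{1} = r$, and therefore $\sum_{j}(\lambda'_{j})^{2} \ge n + r$. For the product, each of the $r$ factors $(1-p^{-i})$ with $i \ge 1$ is at least $1-p^{-1}$, so the full product is bounded below by $(1-p^{-1})^{r}$. Combining,
$$\#\Aut(A) \ge p^{n+r}(1-p^{-1})^{r} = p^{n}(p-1)^{r} \ge p^{n} = \#A.$$

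The only step that is not a one-line inequality is invoking the automorphism formula above, which is entirely classical, so I do not anticipate any real obstacle. A formula-free alternative would be to exhibit an explicit large subgroup of $\Aut(A)$---for instance the "unipotent lower-triangular plus torus" piece with respect to a fixed decomposition of $A$ into cyclic factors, counting lifts of diagonal units together with free off-diagonal choices constrained by the partition---but the closed formula route is cleaner and more uniform.
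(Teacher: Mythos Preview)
Your argument is correct. Both you and the paper invoke the same classical Hall--Macdonald formula for $\#\Aut(A)$, but you extract the bound more efficiently. The paper rewrites the exponent as $n$ plus a complicated alternating sum in the $\lambda_j$, shows this sum is nonnegative except when $\lambda_1\in\{1,2\}$, and then handles the rank-$2$ case (including a separate check at $p=2$) by inspecting the residual factors $\prod_k(p^k-1)$. Your route is cleaner: from $\sum_j(\lambda'_j)^2=n+\sum_j\lambda'_j(\lambda'_j-1)\ge n+r$ and the crude bound $\prod_{j}\prod_{i=1}^{m_j}(1-p^{-i})\ge(1-p^{-1})^{r}$ you get $\#\Aut(A)\ge p^n(p-1)^r$, which settles $r\ge 2$ in one stroke since $p-1\ge 1$. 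The paper's finer analysis could in principle yield sharper inequalities for larger $p$ or larger rank, but for the lemma as stated your version is both shorter and more transparent. One cosmetic quibble: the identity $\#\Aut(\Z/p^n\Z)=p^{n-1}(p-1)$ does not literally cover $n=0$ (where $\#\Aut=1$, not $1-p^{-1}$), though the inequality $\#\Aut(A)\ge\#A(1-p^{-1})$ still holds trivially there.
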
 
\begin{proof}
Denote by $A_{\lambda'} = \prod_{i} \Z_{p}/p^{\lambda_{i}'}\Z_{p}$ the finite abelian $p$-group of associated to the partition $\lambda' = (\lambda_{i}')_{i} = (\lambda_{1}' \ge \lambda_{2}' \ge \ldots )$. Recall that the number of automorphisms of a finite abelian $p$-group of type $\lambda'$ is
$$\# \Aut A = p^{|\lambda'|+2n(\lambda')} \prod_{j \ge 1} \prod_{k=1}^{\lambda_{j}-\lambda_{j+1}}(1-p^{-k})$$
where $\lambda$ denotes the dual partition of $\lambda'$, $|\lambda'| = \sum_{j \ge 1} \lambda_{j}' = n$, and $n(\lambda') = \sum_{i} (i-1)\lambda_{i}' = \sum {\lambda_{j} \choose 2}$ \cite{MR1354144}.
To find the lower bound for $\# \Aut(A)$ when $\# A = p^{n}$, we rewrite this expression as follows

\begin{equation} \label{eqn:aut-expression}
\# \Aut A = p^{n} p^{\sum_{j \ge 1} (\lambda_{j}^{2}-\lambda_{j})-\frac{(\lambda_{j}-\lambda_{j+1})^{2}}{2} - \frac{(\lambda_{j}-\lambda_{j+1})}{2}} \prod_{j \ge 1} \prod_{k=1}^{\lambda_{j}-\lambda_{j+1}}(p^{k}-1),
\end{equation}
where the $(\lambda_{j}^{2}-\lambda_{j})$ terms come from $n(\lambda')$ and the terms $-\frac{(\lambda_{j}-\lambda_{j+1})^{2}}{2} - \frac{(\lambda_{j}-\lambda_{j+1})}{2}$ come from normalizing the terms $ \prod_{k=1}^{\lambda_{j}-\lambda_{j+1}}(p^{k}-1)$ to have positive powers of $p$. 

Let's analyze the exponent. We use the notation $\lambda^{2}$ to denote the partition $(\lambda_{1}^{2} \ge \lambda_{2}^{2} \ge \ldots)$ where each component of $\lambda$ is squared. We write $m_{j} := \lambda_{j} - \lambda_{j+1}$. We have:
\begin{align*}
&\sum_{j \ge 1} (\lambda_{j}^{2}-\lambda_{j})-\frac{(\lambda_{j}-\lambda_{j+1})^{2}}{2} - \frac{(\lambda_{j}-\lambda_{j+1})}{2} \\
&= |\lambda^{2}| - |\lambda| - \frac{|\lambda^{2}|}{2} - \left( \frac{|\lambda^{2}|}{2} - \frac{\lambda_{1}^{2}}{2} \right) - \frac{|\lambda|}{2} + \left( \frac{|\lambda|}{2} - \frac{\lambda_{1}}{2} \right) + \sum_{j \ge 1} \lambda_{j} \lambda_{j+1} \\
&= \frac{\lambda_{1}^{2}}{2} - \frac{\lambda_{1}}{2} - |\lambda| +  \sum_{j \ge 1} \lambda_{j} \lambda_{j+1} \\
&= \frac{\lambda_{1}^{2}}{2} - \frac{\lambda_{1}}{2} - |\lambda| +  \sum_{j \ge 1} (\lambda_{j+1}+m_{j}) \lambda_{j+1} \\
&= \frac{\lambda_{1}^{2}}{2} - \frac{\lambda_{1}}{2} - |\lambda| + |\lambda^{2}|-\lambda_{1}^{2} + \sum_{j \ge 1} m_{j} \lambda_{j+1} \\
&=\frac{\lambda_{1}^{2}}{2} - \frac{3\lambda_{1}}{2} + \left( (|\lambda^{2}|-\lambda_{1}^{2}) - (|\lambda|-\lambda_{1}) \right) + \sum_{j \ge 1} m_{j} \lambda_{j+1}. 
\end{align*}
The only term which can be negative is $\lambda_{1}^{2}/2 - 3\lambda_{1}/2$. It is in fact non-negative, except for $\lambda_{1} = 1,2$ in which case it is equal to $-1$. 

Next, to prove that $\#\Aut A \ge \# A$ whenever $\mathrm{rank}_{p}(A) \ge 2$, note that when $\lambda_{1} = 2$, then either one of the $\lambda_{j} - \lambda_{j+1} = 2$ or two of them are $= 1$. In both cases, $p^{-1}$ times the corresponding terms $\prod_{k=1}^{\lambda_{j}-\lambda_{j+1}}(p^{k}-1)$ in Equation (\ref{eqn:aut-expression}) is $\ge 1$, except when $p=2$ and two of the $\lambda_{j} - \lambda_{j+1} =1 $. In that last case, the term $\sum_{j \ge 1} m_{j} \lambda_{j+1}$ is $\ge 1$.

If $\lambda_{1} = 1$, then $A$ is a cyclic $p$-group, whence $\# \Aut A = (p^{n}-p^{n-1}) = p^{n} (1-p^{-1}) = \# A (1-p^{-1})$. This completes the proof. 
\end{proof}

\section{Proof of Theorem \ref{thm:main-entropy}}

With the preliminaries in hand, we turn to the proof.

\begin{enumerate}[I)]
\item To show that ${\rm \mathbf{H}}(\nu^{u}_{\rm CL})$ is finite for all $u \ge 0$, it will suffice to show that ${\rm \mathbf{H}}(\nu^{0}_{\rm CL})$ is finite as we will show in II) that ${\rm \mathbf{H}}(\nu^{u}_{\rm CL})$ is strictly decreasing in $u$.
Now, using $\log(x) \ll_{\varepsilon} x^{\varepsilon}$, we have the following estimates
\begin{align*}
{\rm \mathbf{H}}(\nu^{0}_{\rm CL}) &= - \sideset{}{'}\sum \frac{F_{0}}{\# \Aut A} \log \left( \frac{F_{0}}{\# \Aut A} \right)   \\
&\ll_{\varepsilon} -\log(F_{0}) + F_{0}\sideset{}{'} \sum_{A \ne 1}\frac{1}{(\# \Aut A)^{1-\varepsilon}} \\
&= -\log(F_{0}) + F_{0}\sum_{n \ge 1}\sideset{}{'} \sum_{\#A = p^{n}} \frac{1}{(\# \Aut A)^{1-\varepsilon}} \\
&\le -\log(F_{0}) + F_{0}\sum_{n \ge 1} \frac{\pi(n)}{(p^{n- 1 })^{1-\varepsilon}} < \infty
\end{align*}
where $\pi(\cdot)$ is the partition function. The sum on the last line is finite by the root test since 
$$\frac{\pi(n)}{(p^{n- 1 })^{1-\varepsilon}} \sim \frac{\frac{1}{4n\sqrt{3}}e^{\pi \sqrt{\frac{2n}{3}}}}{\left(p^{n- 1}\right)^{1-\varepsilon}}$$
as $n \rightarrow \infty$, implying that $\left( \frac{\pi(n)}{(p^{n- 1 })^{1-\varepsilon}} \right)^{\frac{1}{n}} \xrightarrow{n \to \infty} \frac{1}{p^{1-\epsilon}} < 1$. 

\begin{remark}
Running this argument with the identity ${\rm \HH}(\nu^{u}_{\rm CL}) = - \log(F_{u}) + F_{u} \sideset{}{'}\sum_{A \ne 1} \frac{\log( \#A^{u} \# \Aut A)}{\#A^{u} \#\Aut A}$ shown below gives that ${\rm \HH}(\nu^{u}_{\rm CL}) < \infty$ for all real $u > -1$. 
\end{remark}

\item We have:
\begin{align*}
{\rm \HH}(\nu^{u}_{\rm CL}) &= - \sideset{}{'}\sum \nu^{u}_{\rm CL} \\ 
&= - \sideset{}{'}\sum \frac{F_{u}}{\#A^{u} \# \Aut A} \log \left( \frac{F_{u}}{\#A^{u} \# \Aut A} \right)  \\
&=  - \sideset{}{'}\sum \frac{F_{u}}{\#A^{u} \# \Aut A} \log(F_{u}) + \sideset{}{'}\sum \frac{F_{u}}{\#A^{u} \# \Aut A} \log (\#A^{u} \# \Aut A) \\
&= - \log(F_{u}) + F_{u} \sideset{}{'}\sum_{A \ne 1} \frac{\log( \#A^{u} \# \Aut A)}{\#A^{u} \#\Aut A} 
\end{align*}
Since $F_{u}$ is a strictly increasing function of $u$, the term $- \log(F_{u})$ is strictly decreasing. 
It would then be sufficient to show that the terms $F_{u} \frac{\log( \#A^{u} \# \Aut A)}{\#A^{u} \#\Aut A}$ are individually decreasing 
(note the tension since $F_{u}$ is strictly increasing). This reduces to proving:
\begin{equation} \label{eqn:decreasing-terms}
\#A^{u+1} \# \Aut A \le (\#A^{u} \# \Aut A)^{(1-p^{-(u+1)})\#A}
\end{equation}
for $A \ne 1$ and $u \ge 0$.

When $p \ge 3$ and $u \ge 1$ we have: 
$$ \#A^{u+1} \# \Aut A \le (\#A^{u} \# \Aut A)^{p-1} \le  (\#A^{u} \# \Aut A)^{(1-p^{-(u+1)})\#A}.$$

When $p = 2$ and $u \ge 1$, we have: 
$$ \#A^{u+1} \# \Aut A \le (\#A^{u} \# \Aut A)^{\frac{3}{4}\#A} \le  (\#A^{u} \# \Aut A)^{(1-p^{-(u+1)})\#A}.$$
except when $A = \Z/2$ and $u = 1$. 

Now, when $u = 0$, we need to show that:
$$ \#A \# \Aut A \le  (\# \Aut A)^{(1-p^{-1})\#A}.$$

The reduces to 
$$ \#A \le  (\# \Aut A)^{\#A-1-\#A/p}$$

Using Lemma \ref{lem:aut-lower-bound}, we see that this is true except:
if $A = \Z/2$, for which $(\# \Aut A)^{\#A-1-\#A/p} = 1$,
if $A = \Z/4$, for which $(\# \Aut A)^{\#A-1-\#A/p} = 2$, or
if $A = \Z/3$, for which $(\# \Aut A)^{\#A-1-\#A/p} = 2$.

Thus, we have that each term is individually decreasing with the following four exceptions: 
\begin{enumerate}
\item when $A = \Z/2$ and $u = 0$;
\item when $A = \Z/4$ and $u = 0$;
\item when $A = \Z/2$ and $u = 1$;
\item when $A = \Z/3$ and $u = 0$. 
\end{enumerate} 

For these, one checks directly that: 
{\small
\begin{align*}
&\log(F_{0}) + F_{0} \frac{\log(\# \Aut \Z/2)}{\#\Aut \Z/2} + F_{0} \frac{\log(\# \Aut \Z/4)}{\#\Aut \Z/4} - \log(F_{1}) - F_{1}\frac{\log( \#\Z/2 \# \Aut \Z/2)}{\# \Z/2 \#\Aut \Z/2} - F_{1} \frac{\log(\Z/4 \# \Aut \Z/4)}{\Z/4 \#\Aut \Z/4} \ge 0.44 \\
&\log(F_{1}) + F_{1} \frac{\log(\#\Z/2 \#\Aut \Z/2)}{\#\Z/2 \#\Aut \Z/2} - \log(F_{2}) - F_{2} \frac{\log((\#\Z/2)^{2} \#\Aut \Z/2)}{(\#\Z/2)^{2} \#\Aut \Z/2} \ge 0.21 \\
&\log(F_{0}) + F_{0} \frac{\log(\#\Aut \Z/3)}{\#\Aut \Z/3} - \log(F_{1}) - F_{1} \frac{\log(\#\Z/3 \#\Aut \Z/3)}{\#\Z/3 \#\Aut \Z/3} \ge 0.34
\end{align*}
}

by using the bound $(1-p^{-k})^{p/(p-1)} \le \prod_{j=k}^{\infty} (1-p^{-j}) \le 1$ (obtained using the concavity of $\log(1-t)$) to estimate the $F_{u}$ terms. We conclude that ${\rm \HH}(\nu^{u}_{\rm CL})$ is a strictly decreasing function of $u \in \mathbb{N}$.

\item Lastly, by the expression for ${\rm \HH}(\nu^{u}_{\rm CL})$ obtained in the proof of item II) above, and the fact that $\log(x) \le x$, we have the bound 
$${\rm \HH}(\nu^{u}_{\rm CL}) = - \log(F_{u}) + F_{u} \sideset{}{'}\sum_{A \ne 1} \frac{\log( \#A^{u} \# \Aut A)}{\#A^{u} \#\Aut A}  \le - \log(F_{u}) + F_{u} \sideset{}{'}\sum_{A \ne 1} \frac{u}{\#A^{u-1}\#\Aut A} + F_{u} \sideset{}{'}\sum_{A \ne 1} \frac{1}{\#A^{u}}$$
for $u \ge 2$. Now, since $\# A \ge p$ for $A \ne 1$, we obtain 
\begin{align*}
{\rm \HH}(\nu^{u}_{\rm CL}) &\le - \log(F_{u}) + \frac{uF_{u}}{p^{u-1}} \sideset{}{'}\sum_{A \ne 1} \frac{1}{\#\Aut A} + \frac{F_{u}}{p^{u-1}} \sideset{}{'}\sum_{A \ne 1} \frac{1}{\#A}  \\
&= \sum_{k \ge 1} \frac{1}{k} \frac{1}{(p^{k}-1)p^{ku}} + \frac{uF_{u}}{p^{u-1}} \sideset{}{'}\sum_{A \ne 1} \frac{1}{\#\Aut A} + \frac{F_{u}}{p^{u-1}} \sideset{}{'}\sum_{A \ne 1} \frac{1}{\#A}  \xrightarrow{u \to \infty} 0.
\end{align*}
Because the Shannon entropy is non-negative, we get ${\rm \HH}(\nu^{u}_{\rm CL})  \xrightarrow{u \to \infty} 0$. 
\end{enumerate}

\section{Proof of Theorem \ref{thm:main-relative} and relative entropy} \label{sec:relative-entropy}

Let $\nu$ and $\mu$ be two discrete probability measures on $X$ with the property that $\mu$ is absolutely continuous with respect to $\nu$,  $\mu \ll \nu$. The \textbf{relative entropy}, also called the Kullback--Leibler divergence, of $\nu$ from $\mu$ is defined as the expected value of $\log(\mu/\nu)$
$$D_{\KL}(\mu \,\, \lvert \rvert \,\, \nu) := \mathbb{E}_{\mu}\big(\log (\mu/\nu) \big) = \sum_{x \in X} \mu(x) \log \left( \frac{\mu(x)}{\nu(x)} \right) \ge 0$$
where we interpret contributions of terms with $\mu(x) = 0$ as $0$.
The relative entropy is non-negative by Gibbs' inequality.
The relative entropy measures the informational content of $\nu$ from the point of view of $\mu$.

\begin{theorem}
Let $\nu^{u_{1}}_{\rm CL}$ and $\nu^{u_{2}}_{\rm CL}$ be Cohen-Lenstra measures on finite abelian $p$-groups associated to unit-ranks $u_{1} \ge 0 $ and $u_{2} \ge 0$ respectively. The relative entropy of $\nu^{u_{1}}_{\rm CL}$ from $\nu^{u_{2}}_{\rm CL}$ is given by:
$$D_{\KL} \left(\nu^{u_{1}}_{\rm CL} \,\, \lvert \rvert \,\, \nu^{u_{2}}_{\rm CL} \right) = 
\log \left( \frac{F_{u_{1}}}{F_{u_{2}}} \right)  
+ (u_{2}-u_{1}) \sum_{i\ge1} \frac{\log(p)}{p^{u_{1}+i}-1} $$
where $F_{u}$ denotes the normalizing constant $\prod_{i \ge 1+u} (1-p^{-i})$.
\end{theorem}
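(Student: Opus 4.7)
The plan is to unwind the definition of the Kullback--Leibler divergence using the explicit description $\nu^{u}_{\rm CL}(A) = F_u/(\#A^{u}\#\Aut A)$ established in the preliminaries. First I would compute the log-ratio
\[
\log\frac{\nu^{u_1}_{\rm CL}(A)}{\nu^{u_2}_{\rm CL}(A)} = \log\!\left(\frac{F_{u_1}}{F_{u_2}}\right) + (u_2-u_1)\log(\#A),
\]
so that, since $\nu^{u_1}_{\rm CL}$ has total mass $1$,
\[
D_{\KL}(\nu^{u_1}_{\rm CL}\,\|\,\nu^{u_2}_{\rm CL}) = \log\!\left(\frac{F_{u_1}}{F_{u_2}}\right) + (u_2-u_1)\,\mathbb{E}_{\nu^{u_1}_{\rm CL}}[\log \#A].
\]
This reduces the theorem to the single identity $\mathbb{E}_{\nu^{u}_{\rm CL}}[\log \#A] = \sum_{i\ge 1} \log(p)/(p^{u+i}-1)$.

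To compute this expectation, I would introduce the Cohen--Lenstra zeta function
\[
Z(s) := \sideset{}{'}\sum_{A} \frac{1}{\#A^{s}\,\#\Aut A} = \prod_{i\ge 1}\frac{1}{1-p^{-s-i}} = \frac{1}{F_s},
\]
whose product expansion is exactly Hall's strange formula applied to the subgroup $1/\#A^{s}\#\Aut A$ generating function (this is the same computation that yields $F_u$ as normalizing constant). Differentiating termwise,
\[
-Z'(u) = \sideset{}{'}\sum_{A} \frac{\log \#A}{\#A^{u}\,\#\Aut A},
\]
and hence $\mathbb{E}_{\nu^{u}_{\rm CL}}[\log \#A] = -F_u\,Z'(u) = -(\log Z)'(u)$. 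Taking the logarithmic derivative of the Euler product and simplifying $p^{-u-i}/(1-p^{-u-i}) = 1/(p^{u+i}-1)$ gives
\[
-(\log Z)'(u) = \sum_{i\ge 1}\frac{\log(p)}{p^{u+i}-1},
\]
which is the desired identity.

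Combining these two steps produces the stated formula. The only thing to verify along the way is that termwise differentiation of $Z(s)$ is legitimate, which is immediate from the uniform convergence of the defining sum on compacta in $\{s > -1\}$ (guaranteed by the same root-test argument used in part I of Theorem~\ref{thm:main-entropy} together with Lemma~\ref{lem:aut-lower-bound}); this is the most delicate ingredient, but it is routine. No other obstacle is expected, since after introducing the zeta function the computation becomes a one-line logarithmic differentiation.
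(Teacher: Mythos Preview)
Your proposal is correct and follows essentially the same approach as the paper: reduce $D_{\KL}$ to $(u_2-u_1)\,\mathbb{E}_{\nu^{u_1}_{\rm CL}}[\log\#A]$, then evaluate this expectation by logarithmic differentiation of the Cohen--Lenstra zeta function and its Euler product. The only cosmetic difference is that the paper works with the truncated zeta functions $\ZZ_k^{(p)}(s)=\prod_{i=1}^{k}(1-p^{-s-i})^{-1}$ and passes to the limit $k\to\infty$ via dominated convergence, whereas you differentiate the full infinite product $Z(s)$ directly; your route is slightly more streamlined but otherwise identical in substance and in the analytic justifications required.
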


\begin{proof}
The definition of $D_{\KL} \left(\nu^{u_{1}}_{\rm CL} \,\, \lvert \rvert \,\, \nu^{u_{2}}_{\rm CL} \right)$ gives
\begin{align}
D_{\KL} \left(\nu^{u_{1}}_{\rm CL} \,\, \lvert \rvert \,\, \nu^{u_{2}}_{\rm CL} \right) &= \sideset{}{'}\sum_{A} \nu^{u_{1}}_{\rm CL}(A) \log \left(\frac{\nu^{u_{1}}_{\rm CL} (A)}{\nu^{u_{2}}_{\rm CL}(A)} \right)  \\
&= \sideset{}{'}\sum_{A} \frac{F_{u_{1}}}{\# A^{u_{1}} \# \Aut A} \log \left( \frac{F_{u_{1}} \#A^{u_{2}}}{F_{u_{2}} \#A^{u_{1}}} \right)\\
&= \log \left( \frac{F_{u_{1}}}{F_{u_{2}}} \right) + F_{u_{1}} (u_{2}-u_{1}) \sideset{}{'}\sum_{A} \frac{\log(\#A)}{\# A^{u_{1}} \# \Aut A}   \\
&=  \log \left( \frac{F_{u_{1}}}{F_{u_{2}}} \right) + (u_{2}-u_{1})F_{u_{1}} \left( - \lim_{k \rightarrow \infty} \left. \frac{\mathrm{d}}{\mathrm{d}s}\ZZ_k^{(p)}(s)  \right|_{s=u_{1}} \right) \label{eqn:relative-entropy}
\end{align}
where $\ZZ_k^{(p)}(s)$ denotes the Cohen-Lenstra zeta function. 
Recall that $\ZZ_k^{(p)}(s)$ is defined as
$$\ZZ_k^{(p)}(s) := \sideset{}{'}\sum_{A} \frac{w_{k}(A)}{{\# A}^{s}}$$
with
\[ 
w_{k}(G) =
\begin{cases}
    w(G) \prod_{i = k-r+1}^{k} (1-p^{-i}) & \textrm{if } k \ge r := \mathrm{rank}(G),  \\
    0 & else
\end{cases}
\] 
and $w(G) := \frac{1}{\#\Aut(G)}$.
Note that $w_{k}(G)$ is increasing in $k$, $w_{k}(G)$ is bounded by $w(G)$, and $w_{k}(G)  \xrightarrow{k \to \infty} w(G) := \frac{1}{\# \Aut G}$. The Cohen-Lenstra zeta function converges for $\Re(s) > -1$ and satisfies the following explicit formula
\begin{equation} \label{eqn:CL-zeta}
\ZZ_k^{(p)}(s) = \prod_{i \ge 1}^{k} (1-p^{-s-i})^{-1}.
\end{equation}
We compute its derivative in two ways. Using the definition, we first find 
\[
\frac{\mathrm{d}}{\mathrm{d}s}  \ZZ_{k}^{(p)}(s) = - \sideset{}{'}\sum_{A} \frac{w_{k}(A) \log \#A}{{\# A}^{s}}
\]
since the series $\sideset{}{'}\sum_{A} \frac{w_{k}(A) \log \#A}{{\# A}^{s}}$ is absolutely uniformly convergent for $s \in [t,\infty)$ for any $t > -1$. This follows by comparing it to $$\sideset{}{'}\sum_{A} \frac{1}{{\# A}^{s-\varepsilon} \#\Aut A}$$ and using the proof of item I) of Theorem \ref{thm:main-entropy}. 
The limit interchange giving line (\ref{eqn:relative-entropy}) follows from Lebesgue's Dominated Convergence Theorem and the same comparison. 

On the other hand, by formula (\ref{eqn:CL-zeta}), we have: 
\[
\frac{\mathrm{d}}{\mathrm{d}s}  \ZZ_{k}^{(p)}(s) = \ZZ_{k}^{(p)}(s) \cdot \frac{\mathrm{d}}{\mathrm{d}s} \left( \log \ZZ_{k}^{(p)}(s) \right)  = \left( \prod_{i=1}^{k} (1-p^{-s-i})^{-1} \right) \left( - \sum_{i=1}^{k} \frac{\log(p)}{p^{s+i}-1} \right). 
\]
It follows that 
$$ \lim_{k \rightarrow \infty} \left. - \frac{\mathrm{d}}{\mathrm{d}s}\ZZ_k^{(p)}(s)  \right|_{s=u_{1}} = \frac{1}{F_{u_{1}}}\sum_{i=1}^{\infty}\frac{\log(p)}{p^{u_{1}+i}-1}, $$
which completes the proof.
\end{proof}

\section*{Acknowledgements}
The author thanks Akshay Venkatesh for his words of encouragement. 
The author was partially supported by an NSERC Postdoctoral Fellowship, the Institute for Advanced Study (through the National Science Foundation under Grant No. DMS-1926686), and Princeton University.

\nocite{*}
\bibliographystyle{plain}
\bibliography{bibfile.bib}

\end{document}